\newtheorem{thm}{Theorem}[section]
\newtheorem{lem}[thm]{Lemma}
\newtheorem{conj}[thm]{Conjecture}
\title{ Bipartite cubic planar graphs are dispersable}
\author {  Zeling Shao, Yanqing Liu, Zhiguo Li{$^*$}\\
{\small School of Science, Hebei University of Technology, Tianjin 300401, China}
\date{}
\footnote{Corresponding author. E-mail: zhiguolee@hebut.edu.cn}
\footnote{This work was supported in part by the Key Projects of Natural Science Research in Colleges and universities of Hebei Province,China (No.ZD2020130) and the Natural Science Foundation of Hebei Province,China (No. A2021202013). }
}
\begin{document}
\baselineskip 0.65cm

\maketitle

\begin{abstract}
 The book embedding of a graph $G$ is to place the vertices of $G$ on the spine and draw the edges to the pages so that the edges in the same page do not cross with each other.
 A book embedding is matching if the vertices in the same page have maximum degree at most 1. The matching book thickness is the minimum number of pages in which a graph can be matching book embedded. A graph $G$ is dispersable if and only if $mbt(G)=\Delta(G)$.
In this paper, we prove that  bipartite cubic planar graphs are dispersable.
\bigskip

\noindent\textbf{Keywords:} Book embedding; Matching book thickness; Dispersable graphs; Bipartite cubic planar graphs

\noindent\textbf{2000 MR Subject Classification.} 05C10
\end{abstract}

\section{Introduction}
The $book ~ embedding$ of a graph $G$ plays an important role in several areas of computer science, including fault-tolerant processor arrays, multilayer printed circuit boards (PCB), sorting with parallel stacks and Turing-machine graphs$^{[1]}$. The book embedding problem of a graph $G$ is to place all vertices of $G$ on the line, called $spine$ of a book, and assign all edges of $G$ to different half planes, called $pages$ of a book, the boundary of these half planes is spine, so that no two edges on the same page cross$^{[2]}$. The $book~ thickness$ of $G$, denoted by $bt(G)$, is the minimum number of pages in which $G$ can be book embedded. The problem of determining the book thickness of graphs has been widely studied$^{[3-6]}$.

%

A book embedding is $matching$
if maximum degree of vertices in $G$ is at most 1 on the same page$^{[7,8]}$. The $matching ~book~ thickness$ of $G$, denoted by $mbt(G)$, is defined analogously to the book thickness as the minimum number of pages in which $G$ can be matching book embedded$^{[7,8]}$. A graph $G$ with maximum degree $\Delta(G)$ is $dispersable$ if it has a proper $\Delta(G)$-edge-colouring and a $\Delta(G)$-pages book embedding so that all edges of one colour lie on the same page$^{[2,7,8]}$. By the definition of dispersable, a graph $G$ is dispersable if and only if $mbt(G)=\Delta(G)$. Complete bipartite graphs, even cycles, cubes and trees are both dispersable$^{[2,8]}$. %

In 1979, F. Bernhart and P. C. Kainen$^{[2]}$ conjectured that every $k$-regular bipartite graph is dispersable. The conjecture holds for $k\leq 2$. However,  J. M. Alam, M. A. Bekos, M. Gronemann, et al.$^{[7]}$ disproved the conjecture for the cases $k=3$ and $k=4$. Meanwhile, they proved that 3-connected bipartite cubic planar graphs are dispersable %
and proposed the following conjecture:

\begin{conj}
 All (i.e., not necessarily 3-connected) bipartite cubic planar graphs are dispersable.
\end{conj}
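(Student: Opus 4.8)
The plan is to prove $mbt(G)=3$ for every bipartite cubic planar graph $G$ by strong induction on $|V(G)|$ (a disconnected $G$ being handled componentwise). Since $G$ is bipartite with $\Delta(G)=3$, K\"onig's edge-colouring theorem supplies a proper $3$-edge-colouring, and because every page of a matching embedding is a matching we always have $mbt(G)\ge\Delta(G)=3$; hence it suffices to exhibit a linear order of $V(G)$ on the spine together with a proper $3$-edge-colouring for which each of the three colour classes is a non-crossing matching, one per page. When $G$ is $3$-connected it is cubic, so the theorem of Alam et al. quoted above already furnishes such an embedding, giving the base case. For the inductive step I first record two elementary facts about a connected cubic graph: a parity count shows it has no bridge and no cut vertex, so it is $2$-connected, and since vertex- and edge-connectivity coincide for cubic graphs, a graph that is not $3$-connected must possess a $2$-edge-cut.

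Let $\{u_1v_1,u_2v_2\}$ be such a cut, with $C_1\ni u_1,u_2$ and $C_2\ni v_1,v_2$ the two components of $G-\{u_1v_1,u_2v_2\}$. The crucial point, and the place where bipartiteness is really used, is a degree count inside $C_1$: every vertex of $C_1$ has degree $3$ except $u_1,u_2$, which have degree $2$, so balancing the two sides of the bipartition of $C_1$ forces $u_1\neq u_2$ and, moreover, forces $u_1,u_2$ into \emph{different} parts (the same-part configuration would demand $3(|X_1|-|Y_1|)=2$, which is impossible). Consequently the graphs $G_1=C_1+u_1u_2$ and $G_2=C_2+v_1v_2$, obtained by adding a dummy edge on each side, are again bipartite, cubic, planar (each new edge is routed through the region the cut edges and the opposite side used to occupy) and strictly smaller than $G$. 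Thus the apparent parity obstruction to splitting a bipartite cubic graph disappears automatically, and the induction hypothesis applies to both $G_1$ and $G_2$.

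To recombine the pieces I will use a strengthened induction hypothesis: each $G_i$ admits a $3$-page matching embedding in which the endpoints of a prescribed edge occupy the two ends of the spine, applied with the prescribed edges being the dummies $u_1u_2$ and $v_1v_2$. Placing $u_1,u_2$ at the ends of $G_1$ and $v_2,v_1$ at the ends of $G_2$ and concatenating yields the spine $u_1,\dots,u_2,v_2,\dots,v_1$. I then delete the two dummy edges and insert the genuine cut edges $u_1v_1$ and $u_2v_2$: the former spans the whole spine and the latter joins two consecutive vertices, so neither crosses anything. Deleting the dummy $u_1u_2$ frees its colour $c_1$ at both $u_1$ and $u_2$; after permuting the colours of $G_2$ so that its dummy colour also becomes $c_1$, the colour $c_1$ is free at $v_1,v_2$ as well, and colouring both new edges $c_1$ gives a proper $3$-edge-colouring. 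A short check then confirms that on page $c_1$ the old $C_1$- and $C_2$-edges occupy disjoint spine intervals while the two new edges are respectively the outermost edge and a short edge between consecutive vertices, so every colour class stays a non-crossing matching.

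The main obstacle is to make the strengthened hypothesis genuinely inductive. Two tasks must be dovetailed. First, for the $3$-connected base case one must show that \emph{any} prescribed edge can be forced to the two ends of the spine; this should follow by choosing the outer face to contain that edge and driving the canonical/$st$-ordering from its endpoints, but it has to be checked against the particular dispersable embedding used in the $3$-connected result. Second, in the inductive step the splice location for $G_2$ (the dummy $u_1u_2$) must remain accessible while the prescribed edge of $G$ is simultaneously pushed to the ends. When that prescribed edge lies in the interior of one side, one cannot place both it and the dummy at the two ends of a single spine, so the merge must be reorganised to insert $G_2$ at an interior splice point; controlling the two connecting edges there without breaking the matching or non-crossing conditions, and propagating all of these constraints through every level of the recursion, is the delicate heart of the argument.
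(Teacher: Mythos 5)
Your high-level strategy (split along $2$-edge-cuts, add dummy edges, induct with the $3$-connected result of Alam et al.\ as the base case) is the same family of argument as the paper's, and your parity claims (bipartite cubic graphs are $2$-connected; the two attachment vertices on each side of a $2$-cut lie in different parts, so the dummy edge preserves bipartiteness) are correct. But the proposal has two genuine gaps, one of which you acknowledge yourself.

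The central gap is the strengthened induction hypothesis: that every BCP graph admits a $3$-page matching embedding in which the endpoints of a \emph{prescribed} edge occupy the two ends of the spine. Nothing in your argument establishes this, and it is not a routine check. For the base case it would require that for every edge $e$ of a $3$-connected BCP graph there is a sub-hamiltonian cycle, compatible with a dispersable embedding, on which the endpoints of $e$ are consecutive; the construction of Alam et al.\ is driven by a face $3$-colouring and does not give you control over which pairs of vertices end up consecutive. In the inductive step you concede that when the prescribed edge of $G$ lies in the interior of one side, the scheme of pushing both it and the dummy edge to spine ends breaks down, and the ``interior splice point'' repair is exactly the part you leave unproven. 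Since the entire recombination rests on this hypothesis, what you have is a plan, not a proof. The paper sidesteps the need for such a strong statement: it peels off a maximal \emph{ladder} $T_k$ (Lemma 2.4), gives explicit spine orderings for ladders, hamiltonian orderings for leaves of order at most $24$, sub-hamiltonian orderings for $3$-connected leaves, and then merges orderings using only cyclic rotation and reversal (Lemma 3.1) so that the four connecting edges are \emph{nested} rather than forced to the spine ends; colour consistency across the cut comes for free from the edge colouring induced by a face $3$-colouring (Lemmas 4.1--4.3), not from permuting colours after the fact.

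The second gap is simplicity. If the two attachment vertices on one side of your $2$-cut are adjacent --- which happens precisely in ladder configurations, e.g.\ when the cut is $\{(u,x_1),(v,y_1)\}$ in $G=M(G_L,T_1,G_R)$, so that $x_1,y_1$ are joined by a rung --- then $C_2+x_1y_1$ has a double edge, is not a simple BCP graph, and your induction hypothesis does not apply to it. Worse, there are graphs (e.g.\ two $3$-connected pieces joined by a single rung) in which \emph{every} $2$-cut has adjacent attachment vertices on at least one side, so you cannot escape by choosing the cut more cleverly; you must either suppress the offending pair of vertices or absorb it into a middle piece. This is exactly what the paper's ladder decomposition $G=M(G_L,T_k,G_R)$ with non-adjacent attachment pairs (Lemmas 2.4 and 2.5) is for, and it is the reason the ladders $T_k$ appear as a third leaf type, with their own matching book embedding (Lemma 4.4), in the paper's ternary tree decomposition.
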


In this paper, we mainly consider the matching book embedding of connected bipartite cubic planar graphs and  prove the above conjecture is true. For convenience, we refer to a bipartite cubic planar graph as BCP graph for short.

\section{Preliminary}

The graphs in this paper are simple, undirected and connected. For a graph $G$, the $vertex$-$connectivity$ of $G$, denoted by $\kappa(G)$, is the minimum number of vertices whose deletion results in a disconnected graph. If $\kappa(G)\geq k$, then $G$ is $k$-$connected$.
 Similarly, the $edge$-$connectivity$ of $G$, denoted by $\kappa'(G)$, is equal to the minimum number of edges whose removal results in a disconnected graph. If $\kappa'(G)\geq k$, then $G$ is $k$-$edge$-$connected$. %

\begin{lem}$^{[9]}$
If $G$ is a graph with  minimum degree $\delta$, then $\kappa(G)\leq \kappa'(G)\leq \delta$.
\end{lem}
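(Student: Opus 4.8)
The plan is to prove the two inequalities $\kappa'(G)\le\delta$ and $\kappa(G)\le\kappa'(G)$ separately, since they are of quite different character. The inequality $\kappa'(G)\le\delta$ is immediate: picking a vertex $v$ with $\deg(v)=\delta$, the $\delta$ edges incident to $v$ form an edge cut, because deleting them isolates $v$ and disconnects $G$ (assuming $|V(G)|\ge 2$; if $G$ is disconnected the statement is trivial, since then $\kappa(G)=\kappa'(G)=0\le\delta$). Hence $\kappa'(G)\le\delta$.

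For $\kappa(G)\le\kappa'(G)$, write $k=\kappa'(G)$ and fix a minimum edge cut. A minimal edge cut is a bond, so it has the form $[S,\bar S]$, the set of all edges with one endpoint in a nonempty proper subset $S\subset V(G)$ and the other in $\bar S=V(G)\setminus S$, with $|[S,\bar S]|=k$. I would then distinguish two cases. If every vertex of $S$ is adjacent to every vertex of $\bar S$, then $k=|S|\,|\bar S|\ge|S|+|\bar S|-1=n-1$ where $n=|V(G)|$, so $\kappa(G)\le n-1\le k$, since the vertex connectivity never exceeds $n-1$.

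The main case is when there exist $x\in S$ and $y\in\bar S$ with $xy\notin E(G)$. Here I would build an explicit separating set $T$ by charging one vertex to each cut edge: let $T$ collect the $\bar S$-endpoints of all cut edges incident to $x$, together with the $S$-endpoints of all cut edges not incident to $x$. Each of the $k$ cut edges then contributes at most one vertex, giving $|T|\le k$, while the nonadjacency $xy\notin E(G)$ guarantees that neither $x$ nor $y$ is charged into $T$. The step needing care, and which I expect to be the main obstacle, is verifying that $T$ genuinely separates $x$ from $y$: along any $x$--$y$ path the first edge crossing from $S$ to $\bar S$ is a cut edge, and its charged endpoint lies in $T$, so no such path survives in $G-T$. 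This yields $\kappa(G)\le|T|\le k=\kappa'(G)$, completing the chain $\kappa(G)\le\kappa'(G)\le\delta$.
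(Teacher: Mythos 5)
Your proof is correct: the first inequality via isolating a minimum-degree vertex, and the second via Whitney's classical argument --- taking a minimum edge cut $[S,\bar S]$, splitting into the ``all pairs adjacent'' case (where $k\ge n-1\ge\kappa(G)$) and the main case with a nonadjacent pair $x\in S$, $y\in\bar S$, then charging each cut edge to one endpoint to build a separating set $T$ with $x,y\notin T$ and $|T|\le k$ --- are both sound, including the verification that every $x$--$y$ path meets $T$ at its first $S$-to-$\bar S$ crossing. Note that the paper gives no proof of this lemma at all; it is quoted as a known result from the textbook cited as [9], and your argument is essentially the standard proof found in such references.
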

\begin{lem}$^{[10]}$
If $G$ is a connected bipartite cubic graph, then (1) $\kappa(G)=\kappa'(G)$; (2) $G$ is 2-connected; (3) the number of vertices in $G$ is even.
\end{lem}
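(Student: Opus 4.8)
The plan is to establish the three assertions in the order (3), (1), (2), since the later parts reuse ideas (and tools) introduced earlier. Part (3) is pure double counting: writing $A$ and $B$ for the two classes of the bipartition, every edge has exactly one endpoint in each class, so $3$-regularity gives $3|A| = |E(G)| = 3|B|$; hence $|A| = |B|$ and $|V(G)| = 2|A|$ is even. Note that bipartiteness and cubicity are both essential here, whereas the connectivity content of (1) will turn out to use only $3$-regularity.

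For part (1) I would first invoke Lemma 1 to get $\kappa(G) \le \kappa'(G) \le \delta(G) = 3$, so both parameters lie in $\{1,2,3\}$ and it suffices to prove $\kappa'(G) \le \kappa(G)$. I would then split on the value of $\kappa(G)$. If $\kappa(G) = 3$, the bound $\kappa'(G) \le 3$ forces equality. If $\kappa(G) = 1$, a cut vertex $v$ distributes its three incident edges among the $k \ge 2$ components of $G - v$, so some component receives at most one of them; that edge is a bridge, giving $\kappa'(G) = 1$. The substantive case is $\kappa(G) = 2$: I take a minimum vertex cut $\{u,v\}$ and use the standard minimality property that each of $u$ and $v$ has a neighbour in every component of $G - \{u,v\}$. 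Together with $\deg u = \deg v = 3$ this forces the number of components $k$ to be at most $3$ and tightly constrains how the edges leaving $\{u,v\}$ are distributed. Writing $a_i$ and $b_i$ for the numbers of edges reaching the $i$-th component $C_i$ from $u$ and from $v$, I would then exhibit an edge cut of size at most $2$ — either one isolating a single component $C_i$ (of size $a_i + b_i$) or a crossing cut of the form $\{u\}\cup C_i \mid \{v\}\cup C_j$ — by checking the finitely many possible distributions. This yields $\kappa'(G) \le 2$ and hence $\kappa'(G) = \kappa(G)$.

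For part (2) I would argue by contradiction using part (1). If $G$ were not $2$-connected then $\kappa(G) = 1$, so by (1) $\kappa'(G) = 1$ and $G$ has a bridge $e = xy$. Deleting $e$ leaves a component $H$ containing $x$ in which $x$ has degree $2$ and every other vertex has degree $3$. Counting the edges of $H$ across its inherited bipartition $(X_1, X_2)$, with $x \in X_1$, gives $3|X_1| - 1 = |E(H)| = 3|X_2|$, so $3(|X_1| - |X_2|) = 1$, which is impossible modulo $3$. Hence $G$ has no bridge, so $\kappa'(G) \ge 2$, and by (1) $\kappa(G) \ge 2$, i.e.\ $G$ is $2$-connected. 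This is where bipartiteness re-enters, through the divisibility-by-$3$ parity argument.

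The main obstacle is the case $\kappa(G) = 2$ of part (1). Unlike the bridge case, no single canonical edge cut works for every edge distribution: one has to observe that the ``same-bias'' distributions, where $u$ and $v$ send the same number of edges to a given component, are disposed of by an isolating cut, while the ``opposite-bias'' distributions require a crossing cut. Verifying that one of these always has size at most $2$ (and separately treating the subcase $k = 3$, where each component is isolated by a cut of size exactly $2$) is the step that needs genuine care, even though each individual check is short.
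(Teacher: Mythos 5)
Your proof is correct, but there is nothing in the paper to compare it against: the paper states this lemma with a citation to reference [10] and gives no proof of its own. Your write-up therefore supplies a self-contained, elementary argument for a fact the paper treats as imported. The steps all check out. For part (3), note that bipartiteness is not even needed: the handshake lemma gives $3|V(G)|=2|E(G)|$, so $|V(G)|$ is even; your bipartition count proves the slightly stronger fact that the two colour classes have equal size. Part (1) is the classical result that $\kappa=\kappa'$ for cubic graphs, and your case split is the standard proof; the delicate case $\kappa(G)=2$ that you flag does close as claimed. With $a_i,b_i\geq 1$ edges from $u,v$ into each component $C_i$, either some $a_i+b_i=2$ (isolating cut), or else $k=2$, $uv\notin E(G)$, and the distributions are opposite-biased, say $(a_1,a_2)=(1,2)$ and $(b_1,b_2)=(2,1)$; then the crossing cut must pair $u$ with the component into which it sends \emph{two} edges, i.e.\ $S=\{u\}\cup C_2$, whose boundary consists of exactly the single $u$--$C_1$ edge and the single $v$--$C_2$ edge, hence has size $2$ (the other pairing $\{u\}\cup C_1$ gives a cut of size $4$, so the orientation of the crossing cut matters, and a careful write-up should state it). Part (2), the mod-$3$ edge count across the bipartition of the bridge component, is correct, is exactly where bipartiteness enters, and involves no circularity, since your part (1) was proved for all connected cubic graphs. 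One cosmetic remark: $2$-connectedness formally also requires $|V(G)|\geq 3$, which holds since a cubic bipartite graph has at least $6$ vertices.
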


\begin{figure}[htbp]
\centering
\includegraphics[height=3.6cm, width=0.3\textwidth]{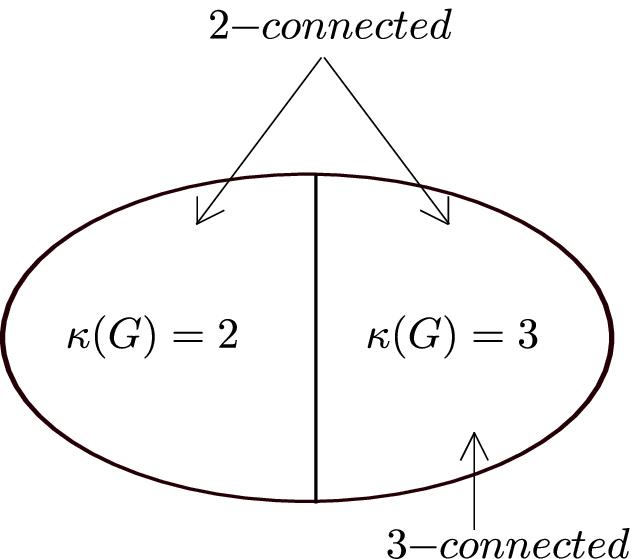}

\center{Fig.1~ Connectivity of a BCP graph $G$.}
\end{figure}

Since $\kappa(G)=\kappa'(G)$, for a BCP graph $G$, we refer to edge-connectivity as connectivity. By Lemma 2.1 and Lemma 2.2, it is easy to prove that $\kappa(G)=3$ or $\kappa(G)=2$ %
(see Fig.1).

For a 3-connected BCP graph $G$, the $sub$-$hamiltonian$ $cycle$ of $G$ is a cyclic ordering of the vertices such that when adding any missing edges between consecutive vertices the resulting graph remains planar$^{[7]}$. M. Alam, M. A. Bekos, M. Gronemann, et al. showed the following result by constructing sub-hamiltonian cycle:

\begin{lem}$^{[7]}$
The matching book thickness of a 3-connected  BCP graph is three.
\end{lem}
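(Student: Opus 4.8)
The plan is to prove the two inequalities $mbt(G)\ge 3$ and $mbt(G)\le 3$ separately. The lower bound is immediate: in any matching book embedding each page carries a matching, so at most one edge incident to a given vertex can be assigned to any single page; since $G$ is cubic, the three edges at a vertex must occupy three distinct pages, whence $mbt(G)\ge\Delta(G)=3$. The whole difficulty therefore lies in exhibiting a matching book embedding on exactly three pages.

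For the upper bound I would first fix the spine ordering. Because $G$ is $3$-connected and planar, Whitney's theorem gives it an essentially unique planar embedding, and I would exploit this rigidity to construct a sub-hamiltonian cycle $C$: a cyclic ordering $v_1,v_2,\dots,v_n$ of $V(G)$ for which adding the missing edges between consecutive vertices keeps the graph planar. Such a cycle can be produced by an inductive/canonical-ordering argument on the $3$-connected planar structure, choosing the ordering so that it is compatible with the fixed embedding. Placing $v_1,\dots,v_n$ on the spine in this order, the planarity of the augmented (Hamiltonian) graph guarantees that every edge of $G$ is realised as a non-crossing arc lying on one of the two sides of $C$.

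Next I would organise the three pages. By Lemma 2.2(3) the number of vertices $n$ is even, and by K\"{o}nig's edge-colouring theorem the bipartite cubic graph $G$ admits a proper $3$-edge-colouring into perfect matchings $M_1,M_2,M_3$. I would select the sub-hamiltonian cycle and the colouring simultaneously so that $M_1$ and $M_2$ consist of edges joining essentially consecutive spine vertices---these form non-crossing matchings and go on pages $1$ and $2$---while the remaining perfect matching $M_3$ of long chords is assigned to page $3$. Assigning one matching per page automatically enforces the degree-$\le 1$ (matching) condition on every page.

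The main obstacle I anticipate is the coordination step: guaranteeing that all three colour classes are \emph{simultaneously} non-crossing on their pages, and in particular that the chord matching $M_3$ on page $3$ contains no two crossing arcs. This is exactly where $3$-connectivity and planarity must be used, via the rigidity of the embedding and the structure of the sub-hamiltonian cycle; the short-edge matchings $M_1,M_2$ are non-crossing essentially for free, so verifying the non-crossing property of $M_3$ is the heart of the argument.
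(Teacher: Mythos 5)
Your lower bound is correct and is the easy half: in a matching book embedding every page is a matching, so a degree-$3$ vertex forces three pages, giving $mbt(G)\ge\Delta(G)=3$. But the upper bound is the entire content of this lemma, and your proposal does not prove it --- it only names the two ingredients (a sub-hamiltonian cycle and a $3$-edge-colouring) and then explicitly defers the step where they must be made compatible (``the coordination step \dots is the heart of the argument''). A plan that ends by identifying the heart of the argument as an unresolved obstacle is a gap, not a proof. Note that the paper itself does not prove this statement either; it quotes it from reference [7] (Alam, Bekos, Gronemann, Kaufmann, Pupyrev), remarking only that the proof there goes by constructing a sub-hamiltonian cycle, so the comparison must be against that argument.

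Two concrete problems with the route you sketch. First, invoking K\"onig's theorem gives an \emph{arbitrary} proper $3$-edge-colouring into perfect matchings, and such a colouring has no relation to the planar embedding; for a fixed spine ordering there is in general no way to place each of its colour classes on its own page without crossings. The colouring that actually works is the one induced by a $3$-face-colouring of $G$ (the dual of a BCP graph is an Eulerian planar triangulation, hence $3$-chromatic --- this is exactly Lemmas 4.1--4.3 of the present paper, and it is also what [7] uses): each edge receives the pair of colours of its two incident faces. With this colouring, the union of two colour classes is precisely the disjoint union of the boundary cycles of the faces of the third colour, a $2$-factor respecting the embedding, and that structure is what makes a compatible spine ordering possible. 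Second, even with the right colouring, merging those disjoint boundary cycles into a single sub-hamiltonian cyclic order in which all three colour classes are simultaneously non-crossing is a genuine inductive construction exploiting $3$-connectivity; it is not ``essentially for free'' for any two of the classes, and your proposal supplies no mechanism for it. To repair the proof you would need to replace the K\"onig colouring by the face-colouring-induced one and then carry out (or correctly cite) the cycle-merging construction of [7].
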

In order to prove the above conjecture is true, it remains  to show  the matching book thickness of a BCP graph $G$ with $\kappa(G)=2$  is three by Lemma 2.3.
\begin{figure}[htbp]
\centering
\includegraphics[height=3.6cm, width=0.74\textwidth]{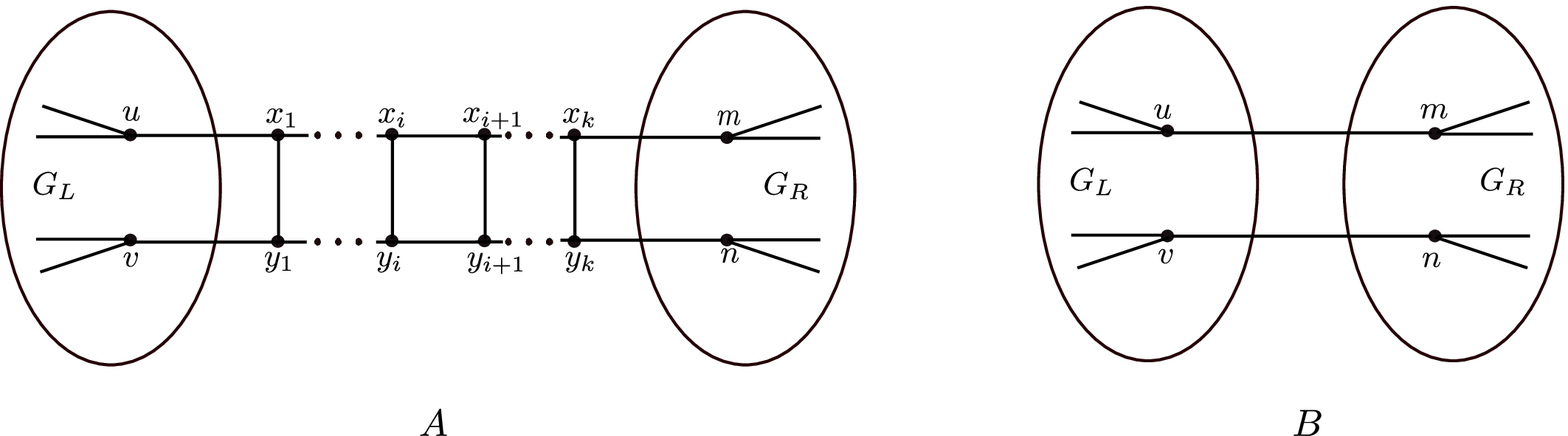}

\center{Fig.2~ (A)~The graph $G$ contains a ladder $T_k$ ($k\geq 1$). (B)~The graph $G$ contains a ladder $T_0$.}
\end{figure}

For a BCP graph $G$ with $\kappa(G)=2$, let us first introduce some definitions and notations before discussing their structure.
The $ladder$ of length $k$, denoted by $T_k$, is a graph obtained from two copies of the path with respective vertex sequences $x_1,\cdots, x_k$ and $y_1,\cdots, y_k$ by joining $x_i$ to $y_i$ for every $i\in \{1,2,3,\cdots,k\}$ (see [10]). Suppose that $G_L$ and $G_R$ are two disjoint graphs, and $u,v\in V(G_L)$, $m,n\in V(G_R)$. We say that the graphs $G_L$ and $G_R$ are joined by a ladder $T_k$ at vertices $u,v, m$ and $n$, and form a new graph $G$ if there exists either a ladder $T_k$ of $G$ for $k\geq 1$ such that $\{(u,x_1),(v,y_1),(m,x_k),(n,y_k)\}\subset E(G)$, or a ladder $T_k$ of $G$ for $k=0$ such that $\{(u,m),(v,n)\}\subset E(G)$, see Fig.2. For convenience, we denote  $G$ by $M(G_L, T_k, G_R)$, $k\geq 0$. %

\begin{lem}$^{[11]}$
If $G$ be a cubic graph with $\kappa'(G)=2$, then there exist two disjoint subgraphs $G_L$ and $G_R$ of G and a ladder $T_k$ for some $k\in \mathbb{N}$ such that $G=M(G_L, T_k, G_R)$.

\end{lem}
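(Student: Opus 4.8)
The plan is to locate the ladder by starting from a single 2-edge-cut and repeatedly ``peeling off'' rungs until the cut can no longer be slid inward. First I would invoke $\kappa'(G)=2$ to fix a 2-edge-cut $\{e_1,e_2\}$ whose removal splits $G$ into components $A$ and $B$, and write $e_1=(u_1,v_1)$, $e_2=(u_2,v_2)$ with $u_1,u_2\in V(A)$ and $v_1,v_2\in V(B)$. A short argument shows the four endpoints are distinct: if, say, $v_1=v_2$, then this vertex sends its two cut-edges to $A$, and its one remaining incident edge either is a bridge of $G$ (contradicting $\kappa'(G)=2$) or forces $B$ to be a single vertex joined to $A$ by three edges (contradicting that $\{e_1,e_2\}$ is a cut of size two). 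Hence $u_1\neq u_2$ and $v_1\neq v_2$, and already $G=M(A,T_0,B)$ with $u=u_1,\,v=u_2,\,m=v_1,\,n=v_2$; this disposes of the base case $k=0$.

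To produce a genuine ladder I would slide the cut inward whenever possible. Suppose the two endpoints $v_1,v_2$ on the $B$-side are adjacent. Since $G$ is cubic and $\{e_1,e_2\}$ are the only two $A$--$B$ edges, each of $v_1,v_2$ has exactly one further incident edge, say $(v_1,v_1')$ and $(v_2,v_2')$; neither $v_1'$ nor $v_2'$ can lie in $A$ (that would be a third crossing edge), so $v_1',v_2'\in V(B)\setminus\{v_1,v_2\}$, and this set is nonempty since otherwise $v_1,v_2$ could not reach degree three. The edge $v_1v_2$ then becomes the first rung $x_1y_1$, and $\{(v_1,v_1'),(v_2,v_2')\}$ is again a 2-edge-cut, now separating $A\cup\{v_1,v_2\}$ from the rest of $B$. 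Iterating peels successive rungs $x_1y_1,x_2y_2,\ldots$ off $B$, and the symmetric extension on the $A$-side grows the other rail. The process stops exactly when both endpoints of the current cut are non-adjacent, at which point the peeled vertices form the rungs of a ladder $T_k$ and the two remaining pieces are the desired $G_L$ and $G_R$.

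The step I expect to be most delicate is verifying that each slide really yields a fresh 2-edge-cut and that the construction terminates correctly. Concretely, I must check that after deleting $\{(v_1,v_1'),(v_2,v_2')\}$ these are the only crossing edges and that both sides are nonempty and connected, which follows from cubicity together with the fact that $v_1,v_2$ have just been absorbed into the $A$-side; and I must rule out the degenerate possibility that the rail ``wraps around'' and closes up on itself. The latter cannot occur here, since a closed ladder is a circular ladder (a prism), which is 3-edge-connected and so contradicts $\kappa'(G)=2$. As $G$ is finite and each slide absorbs two new vertices, the peeling must halt; combined with the distinctness of the cut-endpoints established above, this yields a decomposition $G=M(G_L,T_k,G_R)$ for some $k\in\mathbb{N}$.
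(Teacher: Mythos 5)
The paper never proves this lemma: it is quoted from reference [11] without argument, so there is no in-paper proof to compare against, and your proposal has to stand on its own. It does. The cut-sliding/rung-peeling argument is sound: from any 2-edge-cut, the bridge argument gives four distinct endpoints (hence the $k=0$ decomposition), and whenever the endpoint pair on one side is adjacent, cubicity and simplicity force their third neighbours $v_1',v_2'$ to lie strictly deeper on that side, so the cut slides inward and the absorbed pair becomes a rung; since each slide removes two vertices and leaves a nonempty remainder, the process halts with non-adjacent attachment pairs on both sides, i.e.\ with $G=M(G_L,T_k,G_R)$. Two points should be written out more carefully than you do. First, you need $v_1'\neq v_2'$ for the new cut to again have four distinct endpoints; this is true, but the reason is your own bridge argument applied to the new cut (if $v_1'=v_2'=w$, then $w$'s third edge is a bridge or $w$'s side is a single vertex), and it should be stated that the distinctness argument is being re-applied at every step, not only to the initial cut. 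Second, the fact that the shrinking side $B\setminus\{v_1,v_2\}$ stays connected (and that the two slid edges are the only crossing edges) does not follow from ``cubicity'' as you assert: the crossing-edge count is cubicity, but connectivity follows because a component of $B\setminus\{v_1,v_2\}$ containing neither $v_1'$ nor $v_2'$ would be disconnected from $G$, and one containing exactly one of them would hang by a bridge, contradicting $\kappa'(G)=2$. Your prism remark is a harmless aside — the nonemptiness invariant already excludes wrap-around, and a closed-up rail could also be a M\"obius ladder, which is likewise 3-edge-connected. Finally, a genuine merit of your proof: it terminates exactly when both attachment pairs $\{u,v\}$ and $\{m,n\}$ are non-adjacent, which is the property the paper actually needs later (so that $G_L+(u,v)$ and $G_R+(m,n)$ in Lemma 2.5 are simple) but asserts without justification when it restates Lemma 2.4.
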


Given a cubic graph $G$ with $\kappa(G)=2$, by Lemma 2.2, $\kappa'(G)=\kappa(G)$. By Lemma 2.4, there exist two disjoint subgraphs $G_{L}$ and $G_{R}$  of $G$ and there are non-adjacent pairs of vertices $u,v\in V(G_{L})$ and $m,n\in V(G_{R})$ such that $G$ consists of  the disjoint graphs $G_{L}$ and $G_{R}$ joined by a ladder $T_k$ $(k\geq 0)$ at the vertices $u,~v,~m$ and $n$.
By adding a edge $(u,v)$ to $G_L$ and adding a edge $(m,n)$ to $G_R$, we can get two new graphs $G_{L}':=G_{L}+ (u, v)$ and $G_{R}':=G_{R}+ (m, n)$. %

\begin{lem}$^{[10]}$  Let $G$ be a connected BCP graph with $\kappa(G)=2$. For any subgraphs $G_{L}$, $G_{R}$ and $T_k$ of $G$ such that $G=M(G_L, T_k, G_R)$, where $k\geq 0$, the graphs $G_{L}'$ and $G_{R}'$ are both 2-connected BCP graphs.
\end{lem}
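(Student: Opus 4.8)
The plan is to verify, for $G_L' = G_L + (u,v)$, the four defining properties of a $2$-connected BCP graph — bipartite, cubic, planar, and $2$-connected — and then deduce the same for $G_R' = G_R + (m,n)$ by the symmetry of the decomposition $G = M(G_L, T_k, G_R)$. The cubic property is immediate: the only edges of $G$ leaving $V(G_L)$ are the two cut edges incident to $u$ and $v$ (namely $(u,x_1),(v,y_1)$ when $k\ge 1$, or $(u,m),(v,n)$ when $k=0$), so inside $G_L$ each of $u,v$ has degree $2$ and every other vertex has degree $3$. Adding the single edge $(u,v)$ raises $\deg u$ and $\deg v$ to $3$, and since $u,v$ are non-adjacent in $G_L$ the result is simple and $3$-regular.

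The key step, and the one I expect to be the main obstacle, is showing that adding $(u,v)$ preserves bipartiteness, i.e.\ that $u$ and $v$ lie in different classes of the bipartition $(A_L,B_L)$ of $G_L$. I would prove this by a parity/counting argument. Since $G_L$ is bipartite, $\sum_{w\in A_L}\deg w = |E(G_L)| = \sum_{w\in B_L}\deg w$. Writing $a=|A_L|$, $b=|B_L|$, if both $u$ and $v$ lay in $A_L$ these two sums would be $3a-2$ and $3b$, forcing $3(a-b)=2$, which is impossible; the case $u,v\in B_L$ is excluded symmetrically. Hence $u\in A_L$ and $v\in B_L$ (and incidentally $a=b$), so the $2$-colouring of $G_L$ extends to $G_L'$ and $G_L'$ is bipartite. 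For $k\ge 1$ one can argue more directly — the rung $x_1y_1$ forces $u\sim x_1$ and $v\sim y_1$ to receive opposite colours — but the counting argument has the advantage of also settling $k=0$, where the same computation applied to $m,n$ in $G_R$ shows $m,n$ oppositely coloured and hence $u,v$ oppositely coloured.

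For planarity I would realize $G_L'$ as a minor of the planar graph $G$. In every case there is a $u$–$v$ path $P$ in $G$ internally disjoint from $V(G_L)$: take $u,x_1,y_1,v$ (using the first rung) when $k\ge 1$, and $u,m,\dots,n,v$ through a path of $G_R$ when $k=0$. Deleting everything outside $G_L\cup P$ and then contracting $P$ down to a single edge produces exactly $G_L + (u,v) = G_L'$; since $G$ is planar and planarity is closed under taking minors, $G_L'$ is planar.

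Finally, for $2$-connectivity I would first check that $G_L$ is connected and then invoke Lemma 2.2(2). Connectivity of $G_L$ follows from $G$ being $2$-edge-connected: the only edges of $G$ incident to $V(G_L)$ from outside are the two cut edges, which meet the single vertices $u$ and $v$; hence if $G_L$ were disconnected, at least one of its components would have no edges to the other components and be joined to $V(G)\setminus V(G_L)$ by at most one edge, contradicting $\kappa'(G)=2$. Thus $G_L$, and therefore $G_L'$, is connected. Being a connected bipartite cubic graph, $G_L'$ is $2$-connected by Lemma 2.2(2). Combining the four properties shows $G_L'$ is a $2$-connected BCP graph, and by symmetry so is $G_R'$.
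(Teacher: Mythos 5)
Your proof is correct, but there is nothing in the paper to compare it against: the paper does not prove this statement at all --- it is Lemma 2.5, quoted with a citation to reference [10] (Abrishami and Rahbarnia), and used as a black box. So your self-contained argument goes beyond what the paper itself provides. Checking it on its merits: the degree count (the only edges of $G$ leaving $V(G_L)$ are the two cut edges at $u$ and $v$, so these two vertices have degree $2$ in $G_L$ and all others degree $3$) is right, and your use of the non-adjacency of $u$ and $v$ is legitimate since it is part of the paper's definition of the decomposition $M(G_L,T_k,G_R)$. The parity argument ($3a-2=3b$ being impossible) correctly forces $u$ and $v$ into opposite colour classes under \emph{every} bipartition of $G_L$, which is exactly what is needed for $G_L+(u,v)$ to remain bipartite, and, as you note, it settles $k=0$ and $k\geq 1$ uniformly; it is also the same counting trick that underlies Lemma 2.2(2), so the proof is coherent with the paper's toolkit. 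The minor argument for planarity is sound: in $G_L\cup P$ the internal vertices of $P$ have degree two, so contracting $P$ yields exactly $G_L+(u,v)$, and planarity is minor-closed. The reduction of $2$-connectivity to mere connectivity via Lemma 2.2(2) is valid as well. One presentational wrinkle: for $k=0$ your planarity step already invokes a path of $G_R$ from $m$ to $n$, i.e.\ connectivity of $G_R$, which you only establish (by symmetry) in your final paragraph; since the connectivity argument is independent of everything else, you should prove connectivity of $G_L$ and $G_R$ first and only then run the minor argument. With that reordering the proof is complete and, unlike the paper, makes the lemma self-contained.
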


\section{Placement of vertices on the spine for the BCP graphs}

For a BCP graph $G$,  we consider the placement of vertices on the spine from two situations.

\noindent\textbf{Case 1: The BCP graph $G$ with order at most 24.}

The smallest non-hamiltonian BCP graph has 26 vertices$^{[12]}$. Hence every BCP graph with vertices at most 24 is a hamiltonian graph. We put all vertices of $G$ along the spine with a hamiltonian ordering.

\noindent\textbf{Case 2: The BCP graph  $G$ with order at least 26 and connectivity two.}

Firstly, according to Lemma 2.4 and Lemma 2.5,  we can get a ternary tree decomposition of $G$ with each leaf is a BCP graph with order at most 24, a 3-connected BCP graph with order at least 26 or a ladder by applying following two steps for the graph $G$ by iteration (see Fig.3):

\noindent
{\textbf{Step~1.}}~If $|V(G)|\geq 26$ and $\kappa(G)=2$, by Lemma 2.4, we decompose $G$ into $G_L,T_k$ and $G_R$, otherwise stop.

\noindent
{\textbf{Step~2.}}~Let $G_{L}'=G_{L}+ (u, v)$ and $G_{R}'=G_{R}+ (m, n)$ and go to step 1 for $G=G_{L}'$ and $G=G_{R}'$ respectively.

%

\begin{figure}[htbp]
\centering
\includegraphics[height=4.5cm, width=0.36\textwidth]{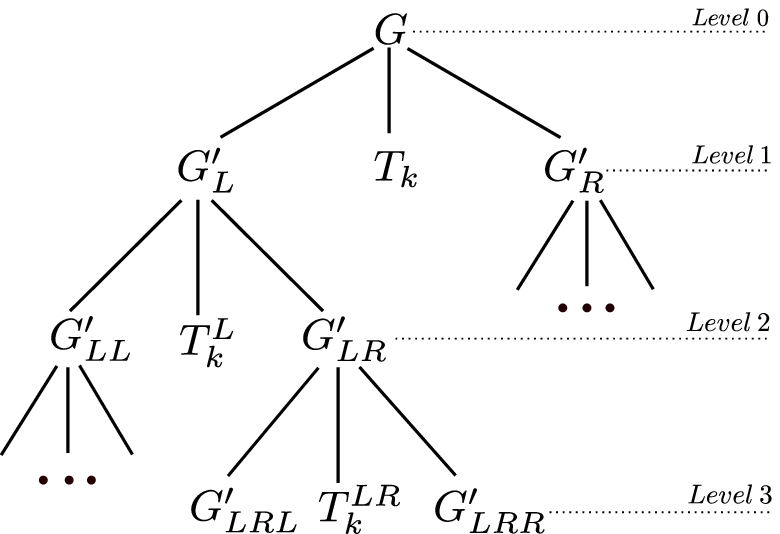}

Fig.3~ A ternary tree decomposition of a BCP graph $G$ with $|V(G)|\geq 26$ and $\kappa(G)=2$.
\end{figure}


Secondly, we use vertex orderings of leaves in a ternary tree decomposition of $G$ to get a vertex ordering of $G$ on the spine. The $level$ of a node in a ternary tree decomposition of $G$ is the length from 
the root graph $G$ to the node. 
The $height$ of a ternary tree decomposition of $G$ is defined as the maximum depth of any leaf node from the root node, denoted by $h(G)$.  Suppose $h(G)=t$, let $G'_{\beta_j}=M(G_{{\beta_j}L}, T^{\beta_j}_k, G_{{\beta_j}R})$ be a node at the ${j}$th level in the tree-decomposition of $G$ where $\beta_j$ is a length-${j}$ sequence of $L$s and $R$s, $1\leq j\leq t-1$. In particular, if $j=t$, then $\beta_t=\beta_{(t-1)}L$ or $\beta_t=\beta_{(t-1)}R$.


\begin{lem}$^{[13]}$
Let $G$ be a simple graph.

(1) If a graph $G$ has an n-book matching embedding with printing cycle $v_1$, $\cdots$, $v_p$, then $G$ also has an n-book matching embedding with printing cycle $v_2$, $\cdots$, $v_p$, $v_1$.

(2) If a graph $G$ has an n-book matching embedding $\alpha$ with printing cycle $v_1$, $\cdots$, $v_p$, then $G$ also has an n-book matching embedding $\alpha^-$ with printing cycle $v_p$, $\cdots$, $v_1$.
\end{lem}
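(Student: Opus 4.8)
The plan is to fix the edge-to-page assignment of the given embedding and to argue that neither operation on the printing cycle can destroy validity. A matching book embedding is determined by two independent pieces of data: which page each edge is drawn on, and the crossing pattern produced within each page by the spine ordering. The \emph{matching} requirement---that on every page each vertex is incident to at most one edge---depends only on the page assignment, which I keep unchanged; hence it is automatically preserved by any reordering of the spine. So the entire task reduces to checking that no page, which was crossing-free before, acquires a crossing after the rotation in (1) or the reversal in (2).

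The key observation is that, on a single page, whether two edges cross is a property of the \emph{cyclic} order of the vertices $v_1,\dots,v_p$ along the spine, and not of the place at which this cyclic order is cut into a linear sequence. To make this precise I would regard each page (a half-plane bounded by the spine) as a topological disk whose boundary circle carries $v_1,\dots,v_p$ in the cyclic order $v_1,v_2,\dots,v_p,v_1$, the spine forming an arc of the boundary and the complementary arc closing it up. Under this identification every edge becomes a chord of the disk, and two edges on the same page cross precisely when their four endpoints alternate around the boundary circle. Equivalently, in the linear picture the edges $(v_a,v_b)$ and $(v_c,v_d)$ with $a<b$ and $c<d$ cross iff $a<c<b<d$ or $c<a<d<b$; this interleaving relation is exactly the alternation relation for the cyclic order.

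With this in hand both parts are immediate. For (1) the rotation $v_1,\dots,v_p\mapsto v_2,\dots,v_p,v_1$ leaves the cyclic order of the vertices unchanged, so on every page the collection of crossing pairs is literally the same set as before and in particular is still empty. For (2) the reversal $v_1,\dots,v_p\mapsto v_p,\dots,v_1$ reverses the orientation of the cyclic order; since the alternation relation is symmetric under reversing the direction of traversal, no page gains a crossing. In both cases the page assignment, and therefore the matching property, is untouched, so the resulting arrangement is again an $n$-book matching embedding with the stated printing cycle.

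I expect the only delicate point to be justifying the cyclic-invariance claim for the edges incident to $v_1$, since in the linear picture such an edge is the outermost arc, while after the rotation it becomes an arc joining the new last vertex to an interior vertex; one must be sure that sliding $v_1$ past all other vertices cannot manufacture a crossing. The disk model disposes of this at once, but if a purely combinatorial proof is preferred I would verify it by a short case analysis: for an edge $(v_1,v_j)$ and any other edge $(v_c,v_d)$ on the same page, a direct comparison of the interleaving condition before the shift (positions $1,j$ against $c,d$) and after it (positions $j-1,p$ against $c-1,d-1$) shows that both reduce to the single inequality $c<j<d$, so the crossing status is unchanged. Two edges both incident to $v_1$ never share a page by the matching condition, so no further case arises.
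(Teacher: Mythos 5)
Your proof is correct. There is, however, no in-paper proof to compare it against: the paper states this lemma with a citation to reference [13] (the authors' earlier work) and gives no argument of its own. Your approach---fixing the edge-to-page assignment (which alone governs the matching condition) and observing that the crossing relation within a page depends only on the \emph{cyclic} order of the spine vertices, hence is invariant under rotation and reversal---is the standard proof of this fact and is exactly what the term ``printing cycle'' encodes; your explicit index check for edges incident to $v_1$, together with the remark that two such edges never share a page under the matching condition, correctly closes the one case a purely linear-order reading might miss.
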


Given a BCP graph $G$ with order at least 26 and  connectivity
two, and  suppose we have obtained a ternary tree decomposition of $G$, let us consider  the procedure of  putting the vertices of $G$ on the spine.

\noindent
{\textbf{Step~1.}}~If $V(G'_{\beta_{t}})\leq 24$, a vertex ordering of $G'_{\beta_{t}}$ on the spine is a hamiltonian ordering, otherwise, take a sub-hamiltonian ordering. If $k$ is even, a vertex ordering of a ladder $T^{\beta_{t-1}}_k$ ($k\geq 1$) on the spine is $y_1$, $x_1$, $x_2$, $y_2$, $\cdots$, $y_{k-1}$, $x_{k-1}$, $x_k$, $y_k$; if $k$ is odd, a vertex ordering of a ladder $T^{\beta_{t-1}}_k$ ($k\geq 1$) on the spine is $y_1$, $x_1$, $x_2$, $y_2$, $\cdots$, $x_{k-1}$, $y_{k-1}$, $y_k$, $x_k$.

\noindent
{\textbf{Step~2.}}~According to Lemma 3.1, we can obtain a vertex ordering of $G'_{\beta_{t-1}}$ by modifying vertex orderings of $G'_{\beta_{t}}$ such that edges $(u,x_1)$ and $(v,y_1)$ are nested, edges $(m,x_k)$ and $(n,y_k)$ are nested, a vertex $v$ is placed next to a vertex $y_1$, a vertex $n$ is placed next to a vertex $y_k$ (or a vertex $m$ is placed next to a vertex $x_k$).

In particular, for a ladder $T^{\beta_{t-1}}_0$,  edges $(u,m)$ and $(v,n)$ are nested and a vertex $u$ is placed next to a vertex $m$ (or a vertex $v$ is placed next to a vertex $n$).

\noindent
{\textbf{Step~3.}}~Applying the above two steps for $G'_{\beta_j}$ by iteration from $G'_{\beta_{t-2}}$ to $G'_{\beta_1}$ until we obtain a vertex ordering of $G$ on the spine.


\section{Main results}

 In this section, we prove the main results by colouring methods.  Let us firstly recall some definitions from graph colouring.

The chromatic number of a graph $G$, denoted by $\chi(G)$, is the minimum number of colours need to colour the vertices of $G$ in such a way that the neighboring vertices receive distinct colours. If $\chi(G)\leq k$, then $G$ is $k$-colourable. A graph $G$ is $k$-edge-colourable if the edges of $G$ receive $k$ colours such that no two adjacent edges are assigned the same colour. The minimum $k$ for which $G$ is $k$-edge-colourable is called its edge chromatic number, and denoted $\chi'(G)$. Similarly, a graph $G$ is $k$-face-colourable if the faces of $G$ receive $k$ colours such that the adjacent faces receive distinct colours. The minimum $k$ for which $G$ is $k$-face-colourable is called its face chromatic number, and denoted $\chi_f(G)$.

%

\begin{lem}$^{[14]}$
A planar triangulation is 3-colourable if and only if every vertex has even degree.
\end{lem}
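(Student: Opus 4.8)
The plan is to prove both implications, handling the easy direction by a local parity argument at each vertex and the hard direction by transferring the even-degree condition to the dual and building a colouring from a consistent orientation.

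For the forward direction, suppose the triangulation $G$ (embedded on the sphere, so every face, including the outer one, is a triangle) admits a proper $3$-colouring $c:V(G)\to\{1,2,3\}$. Since any two vertices lying on a common face are pairwise adjacent, every triangular face receives all three colours. Fix a vertex $v$ of degree at least $3$; its neighbours, read in the cyclic order given by the planar embedding, form a cycle $C_v$ (the link of $v$), because consecutive neighbours span a face with $v$. Each neighbour is adjacent to $v$, hence avoids $c(v)$, so the vertices of $C_v$ use only the two colours of $\{1,2,3\}\setminus\{c(v)\}$; being properly $2$-coloured, $C_v$ must be an even cycle, and therefore $\deg(v)=|C_v|$ is even. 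The only vertices of degree $2$ occur in $K_3$, which is handled directly. This proves necessity.

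For the converse, assume every vertex of $G$ has even degree. I first pass to the dual: the dual $G^{*}$ is a $3$-regular plane graph whose faces correspond to the vertices of $G$, the face around $v$ having length $\deg_G(v)$. Hence all faces of $G^{*}$ are even, so $G^{*}$ is bipartite, which is equivalent to saying that the triangular faces of $G$ admit a proper $2$-colouring into black and white with edge-adjacent triangles coloured differently. Using this face $2$-colouring I orient the edges of $G$: traverse the boundary of every black triangle counter-clockwise and of every white triangle clockwise. Because each edge borders exactly one black and one white triangle, and the two faces then prescribe the same direction on their shared edge, this yields a single consistent orientation of $G$ in which every triangular face is a directed $3$-cycle. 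I then convert this orientation into a colouring by defining $c:V(G)\to\mathbb{Z}_3$ through the rule $c(w)-c(u)\equiv 1\pmod 3$ for every oriented edge $u\to w$, fixing $c$ at one root vertex and extending along a spanning tree. Around a directed triangle $u\to w\to z\to u$ the increments sum to $1+1+1\equiv 0\pmod 3$, so there is no conflict on any face; since on the sphere the face boundaries generate the entire cycle space, consistency around faces forces consistency around every cycle, and $c$ is well defined. For any edge $u\to w$ we have $c(w)\equiv c(u)+1\not\equiv c(u)$, so $c$ is a proper $3$-colouring.

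The main obstacle is the converse, and within it the passage from the even-degree hypothesis to a usable combinatorial gadget: the crux is recognizing that the condition that all degrees are even is precisely the condition that the dual is bipartite, equivalently that the faces are $2$-colourable, and then that a consistent global orientation exists and induces a well-defined $\mathbb{Z}_3$-potential. The well-definedness step quietly uses that the surface is the sphere, so that checking the $+1$ increment rule on the triangular faces alone suffices to rule out monodromy around larger cycles; on a surface of higher genus this last step would genuinely fail.
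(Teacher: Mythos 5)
Your proof is correct, but there is nothing in the paper to compare it against: the paper does not prove this lemma, it quotes it as a known theorem of Heawood (reference [14]) and uses it as a black box, its only role being in Lemma 4.2, where it is applied to the dual $G^{*}$ of a BCP graph. Your argument is the classical self-contained proof and both directions are sound: necessity via the link of each vertex being an even cycle properly coloured with the two colours avoiding $c(v)$, and sufficiency via the chain ``all degrees even $\Rightarrow$ all dual faces even $\Rightarrow$ dual bipartite $\Rightarrow$ triangles properly 2-colourable $\Rightarrow$ consistent orientation making every face a directed triangle $\Rightarrow$ $\mathbb{Z}_3$-potential,'' with well-definedness resting on facial cycles generating the cycle space of a spherical graph. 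What your write-up buys over the bare citation is self-containedness and an explicit marker of where planarity is essential; your closing remark is accurate, since for instance $K_7$ triangulates the torus with every degree equal to $6$ yet has chromatic number $7$. Two small points to make explicit: the claim that the neighbours of a vertex form a cycle needs the triangulation to be simple with at least four vertices (you do handle $K_3$ separately), and the claim that every edge borders one black and one white triangle needs each edge to lie on two distinct faces; both hold automatically for the duals of BCP graphs to which the paper applies this lemma.
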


\begin{lem}
If $G$ is a BCP graph, then $\chi_f(G)=3$.
\end{lem}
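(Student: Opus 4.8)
The plan is to pass to the planar dual and reduce the statement to Lemma 4.1. Recall that a face colouring of the plane graph $G$ is exactly a vertex colouring of its dual $G^*$: the faces of $G$ are the vertices of $G^*$, and two faces are adjacent (share an edge) precisely when the corresponding dual vertices are joined by an edge. Hence $\chi_f(G)=\chi(G^*)$, and the whole proof amounts to showing $\chi(G^*)=3$. My first step is to check that $G^*$ satisfies the hypotheses of Lemma 4.1, namely that it is a planar triangulation in which every vertex has even degree. Since $G$ is cubic, every vertex of $G$ lies on the boundary of exactly three faces, so every face of $G^*$ is bounded by a closed walk of length $3$; that is, $G^*$ is a planar triangulation. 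Since $G$ is bipartite, it has no odd cycle, so every face boundary of $G$ is an even cycle (the boundaries are cycles because $G$ is $2$-connected by Lemma 2.2); as the degree of a vertex of $G^*$ equals the length of the corresponding face of $G$, every vertex of $G^*$ has even degree.

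With these two properties established, the upper bound follows immediately: by Lemma 4.1 the triangulation $G^*$, having all degrees even, is $3$-colourable, so $\chi_f(G)=\chi(G^*)\le 3$. For the lower bound I would argue that $G^*$ contains a triangle. Fix any vertex $w$ of $G$ with incident edges $e_1,e_2,e_3$ in cyclic order; the three face-corners at $w$, say $f_1$ (between $e_1,e_2$), $f_2$ (between $e_2,e_3$), $f_3$ (between $e_3,e_1$), are pairwise adjacent because consecutive corners share an edge. Because $G$ is $2$-connected, each face boundary is a simple cycle and therefore visits $w$ at most once, so $f_1,f_2,f_3$ are three distinct faces; they form a triangle in $G^*$, which forces $\chi(G^*)\ge 3$. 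Combining the two bounds gives $\chi_f(G)=3$. (Alternatively, one can observe that a plane graph is $2$-face-colourable only if it is Eulerian, which a cubic graph never is, again yielding $\chi_f(G)\ge 3$.)

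I expect the main obstacle to be verifying that $G^*$ really is a legitimate input for Lemma 4.1 for every BCP graph, and not only for the $3$-connected ones. The delicate point is that a BCP graph with $\kappa(G)=2$ has a $2$-edge-cut, and a $2$-edge-cut of $G$ corresponds to a pair of parallel edges in $G^*$; thus $G^*$ need not be a simple graph. I would address this by noting that $2$-connectivity of $G$ (Lemma 2.2) rules out the two features that would actually break the argument: $G$ has no bridge, so $G^*$ has no loop, and each face boundary is a simple cycle, so every triangular face of $G^*$ has three genuinely distinct vertices. Consequently the parallel edges do not affect the chromatic number, since a double edge imposes the same constraint as a single edge, and $\chi(G^*)$ equals the chromatic number of the underlying simple triangulation, to which Lemma 4.1 applies; one should also confirm that the even-length conclusion covers the unbounded face of $G$, which it does since that boundary is likewise an even cycle. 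Making this simplicity reduction explicit is the part of the argument that deserves the most care.
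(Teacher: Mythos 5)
Your route is the same as the paper's: identify $\chi_f(G)$ with $\chi(G^*)$, verify that $G^*$ is a planar triangulation in which every vertex has even degree, and invoke Lemma 4.1; the explicit lower bound you give (a triangle in $G^*$, or alternatively the Eulerian obstruction) is a point the paper leaves implicit, and it is correct. For $3$-connected $G$ your argument is complete. You have also put your finger on the real difficulty, which the paper passes over in silence: when $\kappa(G)=2$, every $2$-edge-cut of $G$ is a minimal cut (since $G$ is bridgeless), hence corresponds to a cycle of length $2$ in $G^*$, i.e.\ to a pair of parallel edges. So in exactly the case this paper needs the lemma for, $G^*$ is \emph{not} simple, and neither Lemma 4.1 nor the duality statement the paper quotes (``a simple connected planar graph is a triangulation iff its dual is cubic'') applies off the shelf.

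However, the repair you propose does not close this gap. Your load-bearing sentence is that $\chi(G^*)$ ``equals the chromatic number of the underlying simple triangulation, to which Lemma 4.1 applies.'' Suppressing parallel edges indeed leaves the chromatic number unchanged, but the resulting simple graph satisfies \emph{neither} hypothesis of Lemma 4.1: deleting one edge of a parallel pair merges the two triangular faces on its sides (which are distinct faces, since they correspond to the two distinct endpoints of the primal edge) into a quadrilateral face, so the simplification is not a triangulation; and the deletion lowers the degree of each endpoint by one, so those two vertices now have \emph{odd} degree. Concretely, for $G=M(G_L,T_0,G_R)$ the simplification of $G^*$ has a $4$-face and exactly two odd-degree vertices, and Lemma 4.1 says nothing about it. To make the lemma true for $\kappa(G)=2$ one needs a genuine extra argument, for instance an induction along the paper's own decomposition: write $G=M(G_L,T_k,G_R)$, take proper $3$-face-colourings of the smaller BCP graphs $G_L'=G_L+(u,v)$ and $G_R'=G_R+(m,n)$ (Lemma 2.5), and check that after permuting the three colours on one side these colourings fit together on $G$, whose faces are faces of $G_L'$ and $G_R'$ enlarged or merged across the cut together with the ladder's quadrilateral faces, the latter coloured alternately; alternatively, prove a multigraph strengthening of Lemma 4.1. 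In fairness, the paper's own proof has exactly the same unaddressed issue, so you noticed something the authors did not --- but as written, both your proof and theirs are only valid for $3$-connected BCP graphs.
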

\begin{proof}
By the definition of the dual graph, $\chi_f(G)=\chi(G^{*})$ and  each vertex of $G^*$ has even degree. It is well known that a simple connected planar graph is a triangulation if and only if its dual graph  is cubic. Hence $G^*$ is a planar triangulation. By Lemma 4.1, we have $\chi(G^{*})=3$. Therefore $\chi_f(G)=3$.
\end{proof}

\begin{lem}
Let $G$ be a BCP graph. Then $\chi'(G)=3$.
\end{lem}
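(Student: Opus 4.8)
The plan is to derive the $3$-edge-colouring directly from the $3$-face-colouring established in Lemma 4.2, exploiting the duality between faces and edges in a cubic planar graph. Since $G$ is cubic, $\chi'(G)\geq \Delta(G)=3$, so the whole task reduces to producing a proper $3$-edge-colouring; this colouring is moreover exactly the one we will want for the three matching pages later, which is why establishing it in this form is convenient.

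First I would fix a proper $3$-face-colouring of $G$ with colour set $\{1,2,3\}$, which exists by Lemma 4.2. Because $G$ is $2$-connected by Lemma 2.2 (equivalently bridgeless, since $\kappa(G)=\kappa'(G)\geq 2$), every edge $e$ lies on the boundary of exactly two distinct faces, and these two faces are adjacent along $e$, hence carry two different colours. I would then colour $e$ with the unique third colour not appearing on its two incident faces. This assigns to every edge a colour in $\{1,2,3\}$.

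The heart of the argument is checking that this edge-colouring is proper. I would analyse an arbitrary vertex $w$ locally: since $G$ is cubic, exactly three faces $F_1,F_2,F_3$ surround $w$ in cyclic order, and consecutive faces share an edge incident to $w$, so the three faces are pairwise adjacent and therefore receive three distinct colours, say $1,2,3$. The three edges at $w$ separate the pairs $(F_1,F_2)$, $(F_2,F_3)$, $(F_3,F_1)$, so by the colouring rule they receive the complementary colours $3,1,2$ respectively, which are all distinct. Hence no two edges meeting at a common vertex share a colour, giving a proper $3$-edge-colouring and $\chi'(G)\leq 3$; combined with $\chi'(G)\geq 3$ this yields $\chi'(G)=3$.

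The step I expect to need the most care is this local analysis at a vertex, specifically the claim that the three faces around a degree-$3$ vertex are pairwise adjacent and thus all differently coloured; this is precisely where bridgelessness (via Lemma 2.2) is essential, since a bridge would force one edge to border a single face and break the ``third colour'' rule. As an alternative, one could bypass the planar structure entirely and invoke K\"onig's edge-colouring theorem, which gives $\chi'(G)=\Delta(G)=3$ for any bipartite graph; but the face-colouring route is preferable here because it reuses Lemma 4.2 and produces the colour classes in a form aligned with the subsequent matching book embedding.
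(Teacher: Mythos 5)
Your proposal is correct and takes essentially the same route as the paper: both convert the $3$-face-colouring of Lemma 4.2 into a $3$-edge-colouring by looking at the two faces incident to each edge (the paper labels an edge by the ``sum'' $E_i+E_j$ of its two face colours, you by the complementary third colour --- an equivalent relabelling), and then combine this with $\chi'(G)\geq\Delta(G)=3$. Your version merely spells out details the paper leaves implicit, namely the vertex-local check of properness and the use of $2$-connectedness to ensure every edge borders two distinct faces.
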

\begin{proof}
By Lemma 4.2, we have that $\chi_f(G)=3$.  For convenience, three colours are denoted by the symbols $E_1,~E_2$ and $E_3$.  These colours are assigned to three faces incident to one vertex. We obtain a proper 3-edge-colouring of $G$ by assigning to each edge the sum of colours of the adjacent pairs of face, i.e., $E_1+E_2,~ E_1+E_3$ and $E_2+E_3$ (see Fig.4). Any two adjacent edges receive the different colour. Since $\chi'(G)\geq \Delta (G)=3,$ we have $\chi'(G)=3$.
\end{proof}

\begin{figure}[htbp]
\centering
\includegraphics[height=3.5cm, width=0.24\textwidth]{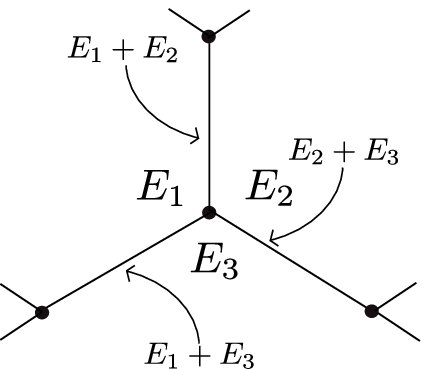}

Fig.4~ A 3-edge-colouring of a BCP graph induced by a 3-face-colouring.
\end{figure}

\begin{lem}
Let $T_k$ ($k\geq 1$) be a ladder. Then $mbt(T_k)=3$.
\end{lem}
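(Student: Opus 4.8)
The plan is to prove the two inequalities $mbt(T_k)\ge 3$ and $mbt(T_k)\le 3$ separately. The lower bound is immediate: in any matching book embedding every page is a matching, so each vertex meets at most one edge per page, whence $mbt(T_k)\ge \Delta(T_k)=3$. The whole content is therefore the upper bound, which I would establish by exhibiting a concrete $3$-page matching book embedding built on the spine ordering already prescribed in Step~1 of Section~3, rather than by any abstract counting argument.

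First I would record the spine positions induced by that ordering in closed form: for an odd index $i$ one has $y_i$ at position $2i-1$ and $x_i$ at position $2i$, while for an even index $i$ one has $x_i$ at position $2i-1$ and $y_i$ at position $2i$; this is checked directly against the zigzag $y_1,x_1,x_2,y_2,y_3,x_3,\dots$ for both parities of $k$. I would then define the colouring that assigns every rung $x_iy_i$ to page $3$, and assigns each rail edge joining indices $i$ and $i+1$ (whether the top-rail edge $x_ix_{i+1}$ or the bottom-rail edge $y_iy_{i+1}$) to page $1$ when $i$ is odd and to page $2$ when $i$ is even. That this is a proper $3$-edge-colouring is routine: at an interior vertex the two incident rail edges carry consecutive index-parities and hence distinct pages, while the rung sits on page $3$; this in particular makes each page a matching.

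The heart of the argument, and the step I expect to be the only real obstacle, is verifying that each page is crossing-free under the ordering. Page $3$ is trivial, since every rung joins two consecutive spine positions. For pages $1$ and $2$ the position formulas show that a rail edge between indices $i$ and $i+1$ spans either two consecutive positions (a short edge) or the four positions $2i-1,\dots,2i+2$ (a long chord); crucially, a top-rail edge is long exactly when $i$ is even and a bottom-rail edge is long exactly when $i$ is odd. Consequently all long top chords receive colour $2$ and all long bottom chords receive colour $1$. Since the only potential crossings in the entire ladder occur between a long top chord and a long bottom chord lying over an adjacent block, the parity-based colouring automatically separates every crossing pair onto different pages, and within a single page the surviving edges either nest (a short edge inside the long chord of the same index) or occupy disjoint position intervals belonging to different index blocks. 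Hence no two edges on a common page cross, and combining this with the lower bound gives $mbt(T_k)=3$.
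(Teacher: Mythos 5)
Your proposal is correct and takes essentially the same route as the paper: the paper's proof also uses the zigzag spine ordering from Step~1 of Section~3 and exhibits the same three-page assignment (rungs on one page, rail edges split by index parity), merely presenting it as Fig.~5 rather than with explicit position formulas, and it likewise gets the lower bound from $mbt\geq\Delta$. The only caveat—shared equally by the paper—is that $\Delta(T_k)=3$ holds only for $k\geq 3$, so the lower bound as stated fails for $k=1,2$; this is a defect of the lemma's statement, not of your argument relative to the paper's.
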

\begin{proof}
Vertex ordering and edge embedding of $T_k$ are showed in Fig.5. By the definition of matching book embedding of $T_k$ ($k\geq 1$), we have that $mbt(T_k)\geq 3$.
Hence $mbt(T_k)=3$, $k\geq 1$.
\end{proof}

\begin{figure}[htbp]
\centering
\includegraphics[height=2.4cm, width=0.66\textwidth]{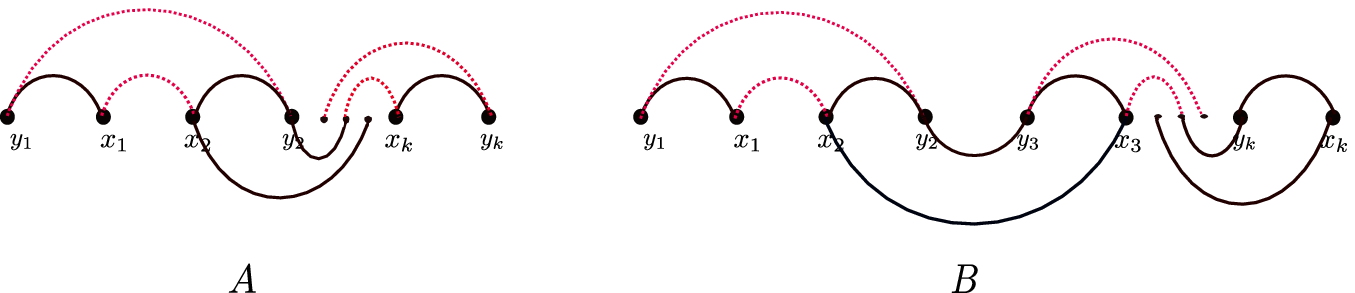}

Fig.5~Matching book embeddings of a ladder $T_k$ ($k\geq 1$). (A) $k$ is even. (B) $k$ is odd.
\end{figure}

\begin{figure}[htbp]
\centering
\includegraphics[height=6. cm, width=0.5\textwidth]{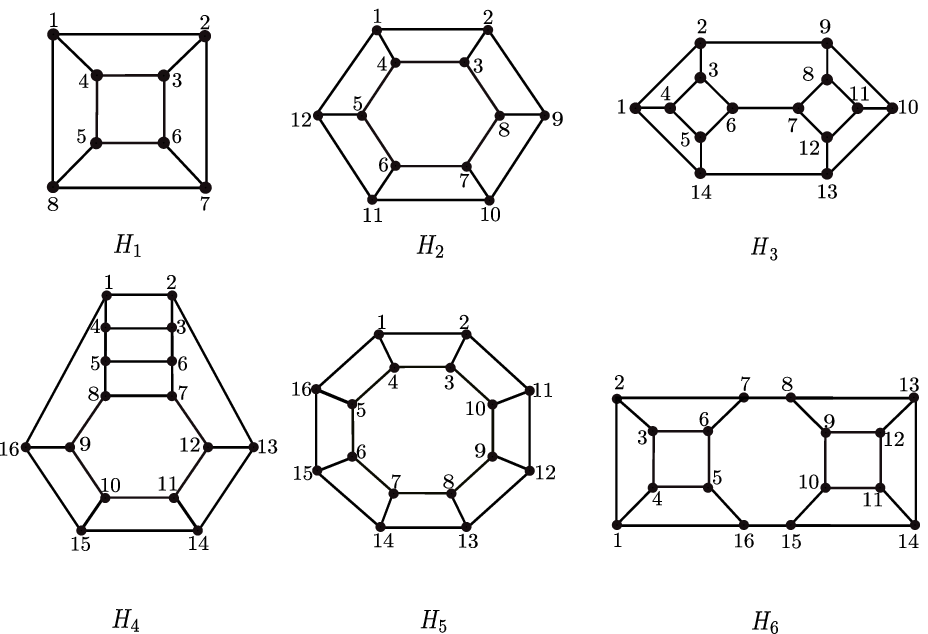}

Fig.6~ All BCP graphs with order at most 16.
\end{figure}
Now we investigate the matching book thickness of a BCP graph with connectivity two from two cases.

\textbf{Case \uppercase\expandafter{\romannumeral1}: The BCP graph $G$ with order at most 24 and connectivity two.}

Let $G=M(G_L,T_k, G_R)$ ($k\geq 0$) be a BCP graph with $|V(G)|\leq 24$ and $\kappa(G)=2$. Since the smallest BCP graph is $H_1$ (see Fig.6), by Lemma 2.4 and Lemma 2.5, there exist BCP graphs $G_L'$ and $G_{R}'$ with order at most 16. For more details about the BCP graphs with order at most 16, we refer to [15]. Fig.6 indicates all BCP graphs with order at most 16, and Fig.7 shows matching book embeddings of these graphs. %

\begin{figure}[htbp]
\centering
\includegraphics[height=10.9cm, width=0.9\textwidth]{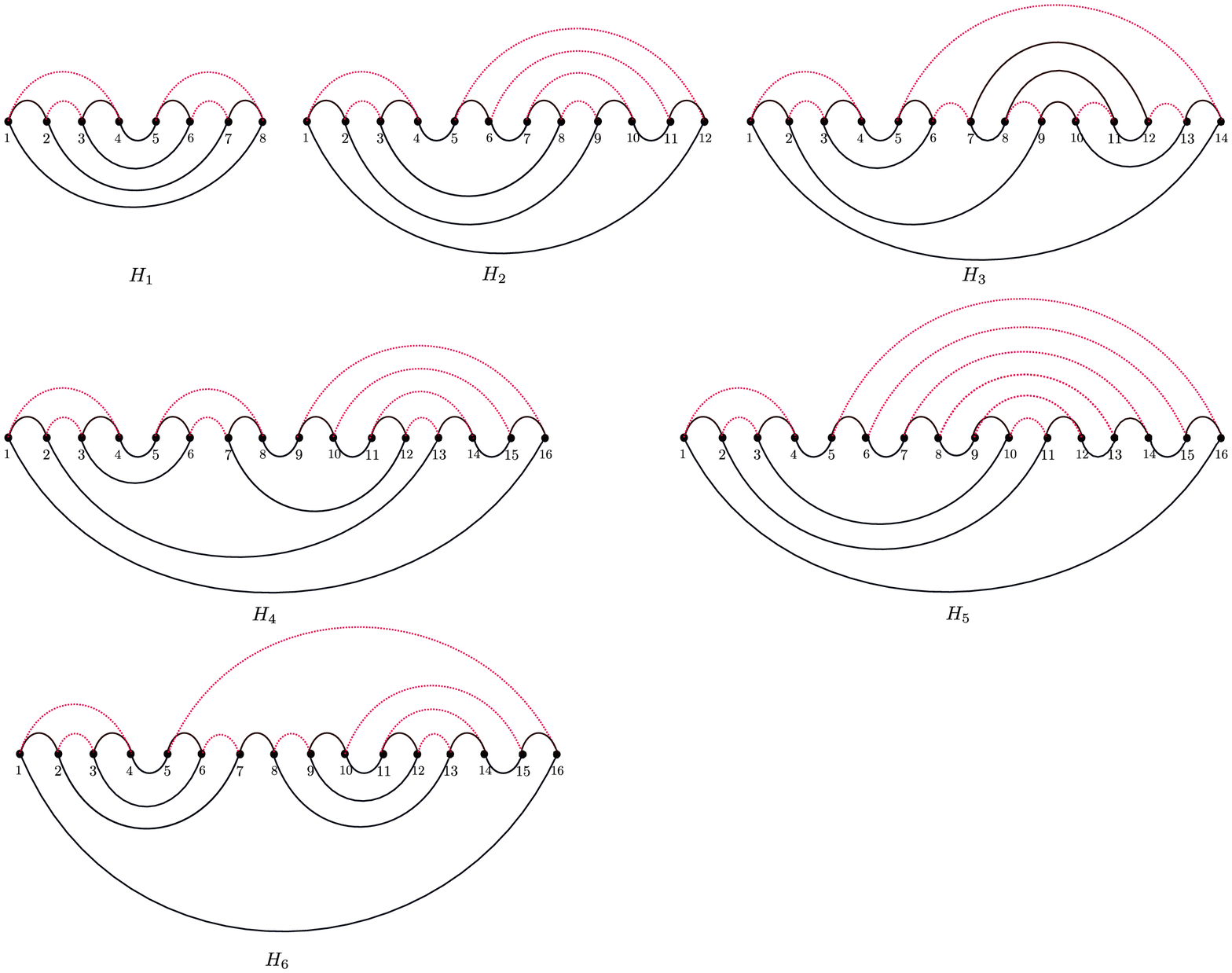}

Fig.7~ Matching book embeddings of the graphs in Fig.6.
\end{figure}

\begin{lem}
Let $G$ be a BCP graph with $|V(G)|\leq 24$ and $\kappa(G)=2$. Then $mbt(G)=3$.
\end{lem}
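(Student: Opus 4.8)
The plan is to prove that any BCP graph $G$ with $|V(G)|\leq 24$ and $\kappa(G)=2$ satisfies $mbt(G)=3$ by reducing to the decomposition $G=M(G_L,T_k,G_R)$ guaranteed by Lemma 2.4, and then assembling a matching book embedding of $G$ from matching book embeddings of the pieces. Since $\Delta(G)=3$ and any proper matching book embedding requires at least $\Delta(G)$ pages, the lower bound $mbt(G)\geq 3$ is immediate from Lemma 4.3 ($\chi'(G)=3$ forces at least three pages); so the entire content is the upper bound $mbt(G)\leq 3$, i.e.\ exhibiting an explicit $3$-page matching embedding.

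First I would invoke Lemma 2.4 and Lemma 2.5 to write $G=M(G_L,T_k,G_R)$ with $G_L'=G_L+(u,v)$ and $G_R'=G_R+(m,n)$ both $2$-connected BCP graphs of order at most $16$. The key enabling observation, already noted in the text before the statement, is that \emph{every} BCP graph of order at most $16$ appears in Fig.6, and Fig.7 supplies an explicit $3$-page matching book embedding for each of them. So for each of $G_L'$ and $G_R'$ I would select its embedding from Fig.7, noting in particular how the added edges $(u,v)$ and $(m,n)$ are placed: the point is that these extra edges can be assumed to lie on a single page in a position that leaves $u,v$ (resp.\ $m,n$) accessible at the appropriate ends of the spine ordering.

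Next I would carry out the gluing, which is exactly the vertex-placement procedure described in Section 3 (Step 2). Using Lemma 3.1 (cyclic rotation and reflection of printing cycles preserve $n$-page matching embeddings) I would rotate/reflect the orderings of $G_L'$ and $G_R'$ so that $v$ sits at the right end adjacent to where $y_1$ of the ladder will go, and $n$ sits adjacent to $y_k$, arranging the ladder vertices in the middle by the explicit ordering from Step 1 (the even/odd cases $y_1,x_1,x_2,y_2,\dots$ versus $y_1,x_1,x_2,y_2,\dots,y_k,x_k$). Then I would delete the auxiliary edges $(u,v)$ and $(m,n)$ and replace them with the genuine connecting edges $(u,x_1),(v,y_1)$ and $(m,x_k),(n,y_k)$ (or, for $k=0$, directly $(u,m),(v,n)$), verifying that these are \emph{nested} and hence can be absorbed into a single page without creating a crossing or a degree-$2$ page conflict. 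The $k=0$ case and the $k\geq 1$ case (the latter using $mbt(T_k)=3$ from Lemma 4.4) would be handled separately but by the same nesting principle.

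The main obstacle I anticipate is the bookkeeping of \emph{colours/pages} at the gluing seam rather than crossings per se. Each piece already uses all three pages, so when I splice the ladder edges and the four attachment edges into the combined spine, I must check that at every vertex the three incident edges receive three distinct pages (the matching condition: each page is a matching), and in particular that the colour freed up at $u,v,m,n$ by deleting the auxiliary edges is precisely the colour needed by the new attachment/ladder edges. Making this consistent across the interface—so that the page assignment of $G_L'$, the ladder, and $G_R'$ agree and no vertex ends up with two same-page edges—is the delicate step; I expect to resolve it by permuting the three page labels of one side (a free relabelling that does not affect validity) so that the attachment edges inherit the unique available page at each of $u,v,m,n$, and then to read off a valid global $3$-page matching embedding.
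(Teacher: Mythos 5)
Your overall route is the same as the paper's: decompose $G=M(G_L,T_k,G_R)$ by Lemmas 2.4 and 2.5, take the three-page matching embeddings of $G_L'$ and $G_R'$ from Fig.7 and of the ladder from Fig.5 (Lemma 4.4), order the vertices as in Section 3 so that the attachment edges are nested, delete $(u,v)$ and $(m,n)$, and splice in the connecting edges. The place where you diverge from the paper is also where your argument has a genuine gap: the page bookkeeping at the seams, which you yourself flag as the delicate step but do not actually close.

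Here is the problem. Deleting $(u,v)$ frees the \emph{same} page $P$ at both $u$ and $v$, namely the page carrying $(u,v)$ in the Fig.7 embedding of $G_L'$; since $P$ is the only free page at either vertex, \emph{both} edges $(u,x_1)$ and $(v,y_1)$ are forced onto $P$. So you need a single page that is simultaneously free at $x_1$ and at $y_1$ in the ladder's embedding. Permuting the ladder's page labels can move a common free page onto $P$, but it cannot create one: if $x_1$ and $y_1$ miss \emph{different} pages of the Fig.5 embedding, no relabelling helps. Whether they miss the same page is a property of that concrete embedding which you never verify, and the same issue recurs at $x_k,y_k$, where the common missing page coincides with the one at $x_1,y_1$ if and only if $k$ is even --- a parity fact that governs how $G_R'$ can be relabelled. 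The paper supplies exactly this missing ingredient by a device you do not use: the proper $3$-edge-colouring induced by the $3$-face-colouring (Lemmas 4.2 and 4.3), under which $(u,x_1)$ and $(v,y_1)$ automatically receive a common colour $E_1+E_2$, while $(m,x_k),(n,y_k)$ receive $E_1+E_2$ or $E_1+E_3$ according to the parity of $k$; pages are then identified with colours. Your proof becomes complete once you check from Fig.5 that the rungs lie on one page and the rails alternate between the other two pages, so that $x_i$ and $y_i$ always miss a common page; without that verification (or the paper's colouring lemmas in its place), the relabelling step does not suffice.
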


\begin{proof}
If $G=M(G_L,T_k,G_R)$ is a BCP graph with $|V(G)|\leq 24$ and $\kappa(G)=2$, then the vertices of $G$ are placed on the spine with a hamiltonian ordering.

Firstly, if $V(T_k)=\{x_1,x_2,\cdots,x_k,y_1,y_2,\cdots,y_k\}$,   $k\geq 1$,  without loss of generality,
we suppose that $\{(u,x_1),(v,y_1),(m,x_k),(n,y_k)\}\subset E(G)$.
By Lemma 4.2 and Lemma 4.3, if $k$ is even, then the edges $(u,x_1)$, $(v,y_1)$, $(m,x_k)$ and $(n,y_k)$ receive the same colour $E_1+E_2$.
Since $G'_L$ and $G'_R$ are both BCP graphs (see Fig.6), we colour the edges $(u,v)$ and $(m,n)$ with the colour $E_1+E_2$. On the other hand, if $k$ is odd, then the edges $(u,x_1)$ and $(v,y_1)$ receive the same colour $E_1+E_2$, and the edges $(m,x_k)$ and $(n,y_k)$ have the same colour $E_1+E_3$. %
Similarly, we colour the edge $(u,v)$ with $E_1+E_2$ and colour the edge $(m,n)$ with $E_1+E_3$.

By Lemma 2.4 and Lemma 2.5, there exist graphs $G'_L$, $G'_R$ and $T_k$, $k\geq 1$. The matching book embeddings of $G'_L$ and $G'_R$ are showed in Fig.7, and $mbt(G'_L)\leq 3$ and $mbt(G'_R)\leq 3$. According to Lemma 4.4, we have that $mbt(T_k)=3$. After removing edges ($u,v$) and ($m,n$) from $G$, the edges $(u,x_1)$, $(v,y_1)$, $(m,x_k)$ and $(n,y_k)$ are drawn on  pages. If $k$ is even, we assign the edges $(u,x_1)$, $(v,y_1)$, $(m,x_k)$ and $(n,y_k)$ to the page corresponding to the colour $E_1+E_2$. If $k$ is odd, the edges $(u,x_1)$ and $(v,y_1)$ are drawn on the page corresponding to the colour $E_1+E_2$ and the edges $(m,x_k)$ and $(n,y_k)$ are drawn on the page corresponding to the colour $E_1+E_3$. Since the vertex $u$ is next to $x_1$, $v$ is next to $y_1$, $m$ is next to $x_k$ and $n$ is next to $y_k$ on the spine, the edges that receive the same colour are matching book embedded in the same page without crossing.

One can prove the matching book embedding of a BCP graph with $T_0$ in the same way.

By the definition of matching book embedding of a graph $G$, $mbt(G)\geq \Delta(G)$.  %
Therefore, the BCP graphs with $|V(G)|\leq 24$ and $\kappa(G)=2$ are dispersable.
\end{proof}

\begin{figure}[htbp]
\centering
\includegraphics[height=5 cm, width=0.6\textwidth]{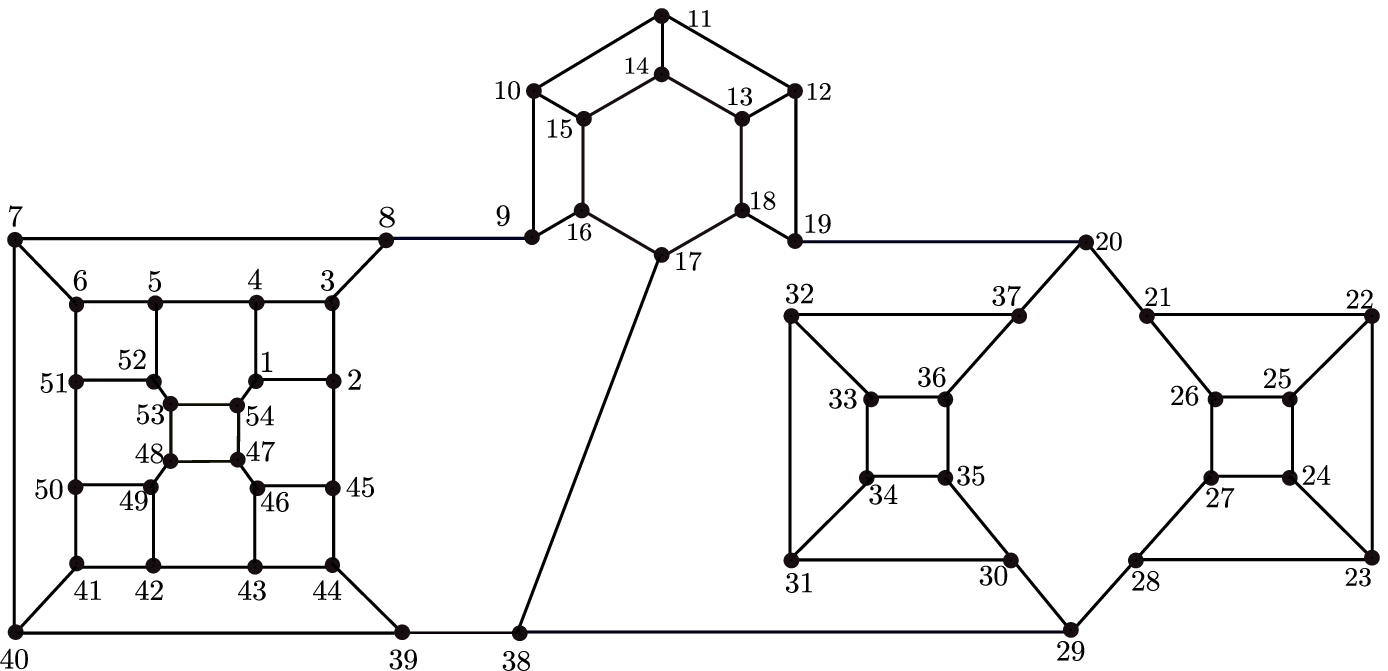}

\center{Fig.8~ A BCP graph $H$ with $|V(H)|=54$ and $\kappa(H)=2$.}
\end{figure}
\noindent\textbf{Case \uppercase\expandafter{\romannumeral2}: The BCP graph $G$ with order at least 26 and connectivity two.}

\begin{lem}
Let $G$ be a BCP graph with $|V(G)|\geq 26$ and $\kappa(G)=2$. Then $mbt(G)=3$.
\end{lem}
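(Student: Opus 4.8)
The plan is to prove $mbt(G)=3$ by a bottom-up induction on the ternary tree decomposition of $G$ built in Section 3, assembling a three-page matching book embedding from the leaves up to the root. The lower bound $mbt(G)\geq\Delta(G)=3$ is immediate from the definition, so all the work lies in producing an embedding on three pages in which each page is a matching. For the pages I would fix, once and for all, a single face $3$-coloring of $G$ and the proper $3$-edge-coloring it induces via Lemma 4.2 and Lemma 4.3, with colors $E_1+E_2$, $E_1+E_3$, $E_2+E_3$; because this is a proper edge-coloring, each color class is automatically a matching, so the remaining task is purely to check that, under the spine ordering of Section 3, no two edges of the same color cross.

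For the base case, every leaf of the decomposition is one of three types: a BCP graph of order at most $24$ (dispersable by Lemma 4.5, together with Lemma 2.3 for its $3$-connected instances), a $3$-connected BCP graph of order at least $26$ (dispersable by Lemma 2.3), or a ladder $T_k$ (dispersable by Lemma 4.4); in each case a three-page matching embedding is available with vertices laid out exactly as prescribed by Step 1 of the spine procedure. For the inductive step I would consider a node $G'_{\beta_j}=M(G_{{\beta_j}L}, T^{\beta_j}_k, G_{{\beta_j}R})$ and assume three-page matching embeddings of its children $G'_{{\beta_j}L}$, $G'_{{\beta_j}R}$ and of the ladder are already in hand. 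Using Lemma 3.1 I would rotate and reflect each child's printing cycle so that the attachment vertices $u,v$ (respectively $m,n$) are placed next to the corresponding ladder endpoints, as in Step 2; then delete the auxiliary edges $(u,v)$ and $(m,n)$, splice in the ladder, and reconnect using the attachment edges $(u,x_1),(v,y_1),(m,x_k),(n,y_k)$ (or $(u,m),(v,n)$ when $k=0$).

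The heart of the argument is the color bookkeeping at the two seams, where the parity of $k$ enters exactly as in the proof of Lemma 4.5. Since the coloring is the global face coloring of $G$, it restricts consistently to every subgraph: the two faces of $G$ on either side of the ladder collapse, after adding the auxiliary edge, into the pair of faces incident to that edge in the child, so the color forced on $(u,v)$ (respectively $(m,n)$) in the child is precisely the color the attachment edges must carry in $G$. Concretely, when $k$ is even all four attachment edges share one color, and when $k$ is odd the left pair and the right pair receive two distinct colors, matching the colors of the auxiliary edges they replace. Because the attachment edges are nested and each attachment vertex sits immediately beside its ladder partner on the spine, they can be drawn crossing-free on the pages named by their colors, exactly as in Lemma 4.5. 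Iterating this merge up to the root produces a three-page matching embedding of $G$, giving $mbt(G)\leq 3$ and hence $mbt(G)=3$.

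The step I expect to be the main obstacle is establishing rigorously that a single face $3$-coloring can be chosen globally consistent throughout the decomposition, so that the induced color of each auxiliary edge matches the ladder attachment it stands for at every seam simultaneously. This requires verifying that the two ladder-bounding faces of $G$ always receive distinct colors (equivalently, that the restricted coloring remains a proper face coloring of each child after the auxiliary edge is added) and that the even/odd color pattern propagates without conflict through all levels of the tree; once this consistency is secured, the crossing-free and matching properties follow routinely from the nesting and adjacency guarantees of Section 3.
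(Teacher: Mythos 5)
Your proposal is correct and follows essentially the same route as the paper: the ternary tree decomposition and spine ordering of Section~3, the face-colouring-induced proper $3$-edge-colouring of Lemmas~4.2--4.3, the leaf embeddings supplied by Lemmas~2.3, 4.4 and 4.5, and the parity-of-$k$ bookkeeping with nested attachment edges at each seam. The consistency issue you flag at the end is real but is glossed over in the paper as well (it is dispatched there with ``the colouring \ldots is similar to the colouring of Lemma 4.5''), so your write-up is, if anything, slightly more explicit than the original about where the remaining care is needed.
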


\begin{proof}
Let $G=M(G_L,T_k,G_R)$ be a BCP graph with $|V(G)|\geq 26$ and $\kappa(G)=2$. All vertices of $G$ are placed on the spine according to the placement of vertices in Case 2 of section 3.

Firstly, if $V(T_k)=\{x_1,x_2,\cdots,x_k,y_1,y_2,\cdots,y_k\}$,   $k\geq 1$,  without loss of generality,
we suppose that $\{(u,x_1),(v,y_1),(m,x_k),(n,y_k)\}\subset E(G)$. The colouring of edges $(u,x_1)$, $(v,y_1)$, $(m,x_k)$ and $(n,y_k)$ in $G$ is similar to the colouring of Lemma 4.5.

According to a ternary tree decomposition of $G$, some graphs are obtained in the end, and each graph is a BCP graph with order at most 24, a 3-connected BCP graph with order at least 26 or a ladder. By Lemma 2.3, Lemma 4.4 and Lemma 4.5, these graphs have matching book thickness three.
After removing edges ($u,v$) and ($m,n$) from $G$, the edges $(u,x_1)$, $(v,y_1)$, $(m,x_k)$ and $(n,y_k)$ are drawn on pages. If $k$ is even, the edges $(u,x_1)$, $(v,y_1)$, $(m,x_k)$ and $(n,y_k)$ are assigned to the page corresponding to the colour $E_1+E_2$. If $k$ is odd, we assign the edges $(u,x_1)$ and $(v,y_1)$ to the page corresponding to the colour $E_1+E_2$ and assign the edges $(m,x_k)$ and $(n,y_k)$ to the page corresponding to the colour $E_1+E_3$. %
According to the placement of vertices, the edges $(u,x_1)$ and ($v,y_1$) are nested, the edges $(m,x_k)$ and $(n,y_k)$ are nested,  no two edges of the same page cross each other. %

One can prove the matching book embedding of a BCP graph with $T_0$ in the same way.

Since $mbt(G)\geq \Delta(G)=3$, we have that $mbt(G)=3$. Thus we conclude that  BCP graphs with $|V(G)|\geq 26$ and $\kappa(G)=2$ are dispersable.

\end{proof}

Fig.8 shows a non-hamiltonian BCP graph $H$ with $|V(H)|=54$ and $\kappa(H)=2$. The matching book embedding of $H$ is illustrated in Fig.9.

By Lemma 2.3, Lemma 4.5 and Lemma 4.6, we obtain the following theorem.

\begin{thm}
All bipartite cubic planar graphs are dispersable.
\end{thm}

\begin{figure}[htbp]
\centering
\includegraphics[height=6.5cm, width=1\textwidth]{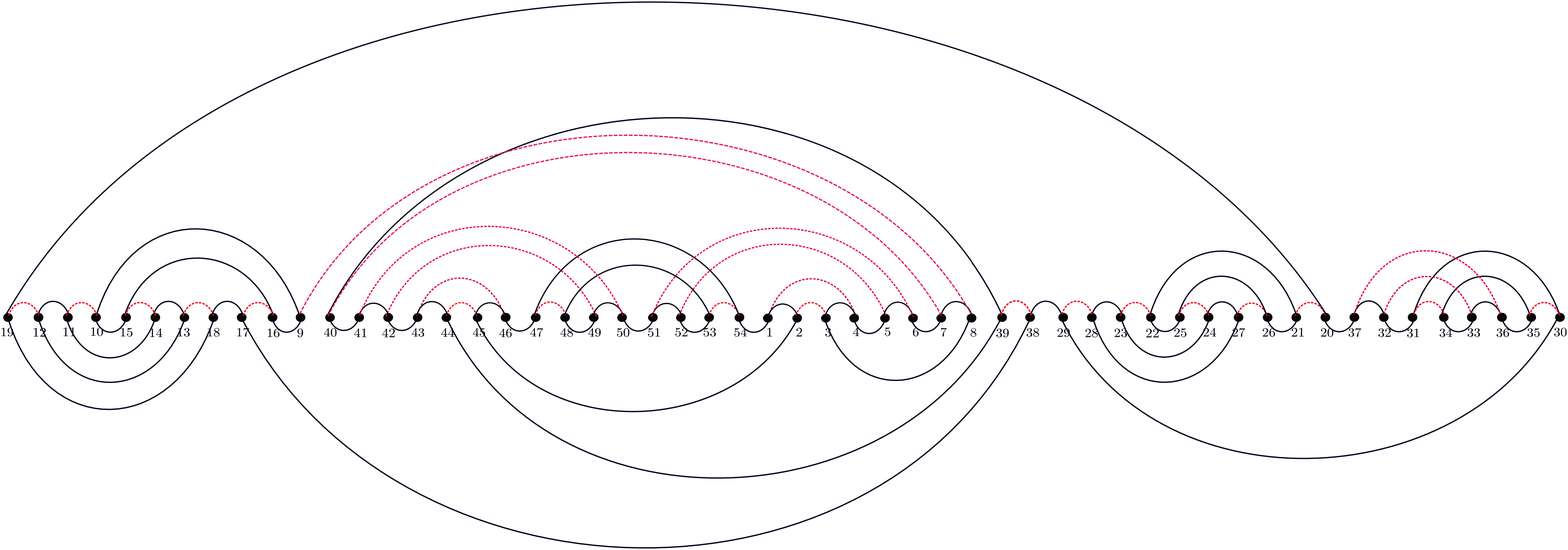}

Fig.9~ A matching book embedding of $H$ in Fig.8.
\end{figure}

\end{document}